\newtheorem{teorema}{Theorem}
\newtheorem{definicija}{Definition}
\DeclareMathOperator*{\Limsup}{\overline{lim}}
\DeclareMathOperator*{\Liminf}{\underline{lim}}
\begin{document}

\title{On the continuity of probabilistic distance}

\author{Dragoljub J.\ Ke\v{c}ki\'c}

\address{University of Belgrade \\ Faculty of Mathematics \\ Studentski trg 16 \\ 11000 Beo\-grad \\ Serbia}

\email{keckic@matf.bg.ac.rs}

\author{Marina M.\ Milovanovi\'c-Aran{\dj}elovi\'c}

\address{Technical College of Applied Studies \\ Tehnikum Taurunum \\ Nade Dimi\'c 4 \\ 11080 Zemun \\ Serbia}
\email{mmarandjelovic@tehnikum.edu.rs}

\begin{abstract}
The famous result of B.~Schweizer and A.~Sklar [Pacific J Math 10(1960) 313--334 - Theorem 8.2] asserts that, given a probabilistic metric space $(X,\mathcal F,t)$, $\mathcal F=\{F_{p,q}:p,q\in X\}$, we have $F_{p_n,q_n}(x)\to F_{p,q}(x)$ provided that $F_{p,q}$ is continuous at $x$ and $t$ is continuous and stronger then {\L}ukasiwicz's $t$-norm. We extend this result to arbitrary continuous triangular norms, i.e.\ we omit the condition "$t$ is stronger then {\L}ukasiewicz's".
\end{abstract}

\subjclass[2010]{46S50}

\keywords{Menger space, $t$-norm}

\maketitle

\section{Introduction and Preliminaries}

Probabilistic metric spaces were introduced by K.~Menger \cite{KM}. In these spaces, instead of the distance $d(p,q)$ between points $p$ and $q$, we consider the distribution functions $F_{p,q}(x)$ and interpret its values as the probability that the distance from $p$ to $q$ is less than $x$.

Recall that a distribution function is a nondecreasing, left continuous mapping $F:\mathbb R\to[0,1]$ such that $\inf \{F(x):x\in{\mathbb R}\}=0$ and $\sup \{F(x):x\in{\mathbb R}\}=1$. Let $\mathcal L$ denote the collection of all distribution functions $F:{\mathbb R}\rightarrow [0,1]$.

\begin{definicija} A probabilistic metric space is an ordered pair $(X,\mathcal{F})$ where $X$ is an nonempty set and $\mathcal{F}$ is a mapping from $X \times X$ to ${\mathcal L}$. The value of $\mathcal{F}$ at $(p,q) \in X \times X$ will be denoted by $F_{p,q}$. The functions $F_{p,q}$ ($p$, $q \in X$) are assumed to satisfy the following conditions:

\begin{enumerate}

\item[(a)] $F_{p,q}(x)=1$ for all $x > 0$ if and only if $p=q$;

\item[(b)] $F_{p,q} (0)=0$;

\item[(c)] $F_{p,q} =F_{q,p}$

\item[(d)] $F_{p,q}(x)=1$ and $F_{q,r}(y)=1$ imply $F_{p,r}(x+y)=1$.
\end{enumerate}
\end{definicija}

Given a probabilistic metric space $(X,{\mathcal F})$, the family of sets
$$U_p(\varepsilon,\lambda)=\{q \in X|F_{p,q}(\varepsilon)>1-\lambda\},\qquad p\in X,~\varepsilon>0,~\lambda>0$$
make a basis of some topology on $X$. The set $U_p(\varepsilon,\lambda)$ is called  $(\varepsilon, \lambda)$--neighborhood of $p \in X$. In this topology, a sequence $(p_n)$ converges to $p$ if and only if for each $\varepsilon > 0$ and each $\lambda > 0$, there exists positive integer $M_{\varepsilon,\lambda}$, such that $p_{n} \in U_{p}(\varepsilon,\lambda)$, i.e.\ $F_{p,p_{n}}(\varepsilon) > 1 - \lambda$ for all $n \geq M_{\varepsilon, \lambda}$.

For more details on the topological preliminaries the reader is referred to \cite{SS}.

\begin{definicija}\label{TrNorm} A mapping $t:[0,1] \times [0,1] \rightarrow [0,1]$ is a triangular norm ($t$-norm) if it satisfies:

\begin{enumerate}
\item\label{TrNorm1} $t(a,1)=a,$ $t(0,0)=0$,

\item\label{TrNorm2} $t(a,b)=t(b,a)$,

\item\label{TrNorm3} $t(c,d) \geq t(a,b)$ for $c \geq a$, $d \geq b$,

\item\label{TrNorm4} $t(t(a,b),c)=t(a,t(b,c))$.
\end{enumerate}
\end{definicija}

An example of $t$-norm is \emph{{\L}ukasiewicz's triangular norm}, defined by $t(a,b)=\max\{a+b-1,0\}$ (see \cite{KMP}).

\begin{definicija} Let $t_1$ and $t_2$ be two triangular norms. We say that $t_2$ is stronger then $t_1$ if and only if:
\begin{enumerate}
\item $t_2(a,b)\ge t_1(a,b)$ for all $a,b\in [0,1]$;

\item there exists $c,d\in [0,1]$ such that $t_2(c,d)> t_1(c,d)$.
\end{enumerate}
\end{definicija}

\begin{definicija}A Menger space is a triplet $(X,{\mathcal F},t)$, where $(X,{\mathcal F})$ is a probabilistic metric space and $t$ is a triangular norm which satisfies the Menger's triangle inequality
$$F_{p,r} (x+y) \geq t[F_{p,q}(x),F _{q,r}(y)]$$
for all $p$, $q$, $r\in X$ and for all $x \geq 0$, $y \geq 0$.
\end{definicija}

Next two theorems were presented by B.~Schweizer and A.~Sklar \cite{SS}.

\begin{teorema} [B.~Schweizer and A.~Sklar \cite{SS} - Theorem 8.1]\label{SS1} If $(X,{\mathcal F},t)$  is a Menger space and $t$ is continuous, then ${\mathcal F}$ is a lower semi-continuous function of points, i.e., for every fixed x, if $q_n\to q$ and $p_n\to p$, then
$$\Liminf_{n\rightarrow\infty}F_{p_{n},q_{n}}(x)=F_{p,q}(x).$$
\end{teorema}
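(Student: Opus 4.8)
The plan is to bound $F_{p_n,q_n}(x)$ from below by inserting the limit points $p$ and $q$ as intermediate vertices and applying Menger's triangle inequality twice. Fix $x$ and choose $\delta>0$ with $2\delta<x$. Splitting $x=\delta+(x-2\delta)+\delta$ and running the chain $p_n,p,q,q_n$, two applications of the inequality, combined with the monotonicity of $t$ (Definition~\ref{TrNorm}, property~\ref{TrNorm3}) to replace the middle factor by a smaller one, give
\[
F_{p_n,q_n}(x)\ \ge\ t\bigl[F_{p_n,p}(\delta),\,t\bigl[F_{p,q}(x-2\delta),\,F_{q,q_n}(\delta)\bigr]\bigr].
\]

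Next I would let $n\to\infty$ with $\delta$ held fixed. By the characterization of convergence in the $(\varepsilon,\lambda)$-topology, the hypotheses $p_n\to p$ and $q_n\to q$ mean exactly that $F_{p_n,p}(\delta)\to1$ and $F_{q,q_n}(\delta)\to1$ for every fixed $\delta>0$ (using $F_{p_n,p}=F_{p,p_n}$). Since $t$ is continuous and satisfies $t(a,1)=t(1,a)=a$, the right-hand side tends to $t\bigl[1,t[F_{p,q}(x-2\delta),1]\bigr]=F_{p,q}(x-2\delta)$, whence
\[
\Liminf_{n\to\infty}F_{p_n,q_n}(x)\ \ge\ F_{p,q}(x-2\delta).
\]
Letting $\delta\to0^+$ and invoking the left-continuity of the distribution function $F_{p,q}$ then yields $\Liminf_n F_{p_n,q_n}(x)\ge F_{p,q}(x)$, which is the asserted lower semicontinuity.

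For the reverse estimate I would run the same construction with the roles interchanged, bounding
\[
F_{p,q}(x+2\delta)\ \ge\ t\bigl[F_{p,p_n}(\delta),\,t\bigl[F_{p_n,q_n}(x),\,F_{q_n,q}(\delta)\bigr]\bigr];
\]
evaluating along a subsequence that realizes the $\Liminf$ and letting $n\to\infty$ gives $F_{p,q}(x+2\delta)\ge\Liminf_n F_{p_n,q_n}(x)$. I expect the main obstacle to sit precisely in the final passage $\delta\to0^+$ here: a distribution function is only assumed \emph{left}-continuous, so $F_{p,q}(x+2\delta)$ converges to the right-hand limit $F_{p,q}(x^+)$ rather than to $F_{p,q}(x)$. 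The chain argument therefore delivers only the two-sided bound $F_{p,q}(x)\le\Liminf_n F_{p_n,q_n}(x)\le F_{p,q}(x^+)$; the two ends coincide, and the stated equality holds, exactly at points where $F_{p,q}$ has no jump, in particular whenever $F_{p,q}$ is continuous at $x$ — which is the regularity genuinely needed for the full continuity statement and for its strengthening established in this paper. The one recurring technical point is to keep the two ``bridge'' factors $F_{p_n,p}(\delta)$ and $F_{q,q_n}(\delta)$ isolated from the central factor, so that the continuity of $t$ and the boundary condition $t(a,1)=a$ can be applied cleanly to each in turn.
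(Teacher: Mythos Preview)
The paper does not prove Theorem~\ref{SS1}; it is quoted as Theorem~8.1 of Schweizer and Sklar and then invoked at the last step of the proof of the main Theorem~3. There is therefore no in-paper argument to compare against directly.

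That said, your double application of Menger's inequality along the chain $p_n,p,q,q_n$, followed by sending the two bridge factors to~$1$ via the continuity of $t$ and the boundary condition $t(1,a)=a$, is precisely the mechanism the paper employs (in the opposite direction) to prove Theorem~3: there the chain $p,p_n,q_n,q$ produces $F_{p,q}(x+2\varepsilon)\ge t_*(F_{p,p_n}(\varepsilon),F_{p_n,q_n}(x),F_{q_n,q}(\varepsilon))$, and the same limit manoeuvre yields the $\Limsup$ bound. So your method and the paper's are the same technique run in complementary directions; together they constitute the full proof of Theorem~3, with your first paragraph supplying what the paper outsources to Theorem~\ref{SS1}.

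Your hesitation about the reverse inequality is not merely a technical scruple --- it is decisive. The \emph{equality} in Theorem~\ref{SS1} as printed cannot hold at every $x$: in the Menger space arising from an ordinary metric via $F_{p,q}(x)=H(x-d(p,q))$ with the left-continuous Heaviside $H$, take $p_n\to p$, $q_n\to q$ with $d(p_n,q_n)<d(p,q)$ for all $n$; then at $x=d(p,q)$ one has $\Liminf_n F_{p_n,q_n}(x)=1>0=F_{p,q}(x)$. What your first paragraph correctly establishes is the lower-semicontinuity \emph{inequality} $\Liminf_n F_{p_n,q_n}(x)\ge F_{p,q}(x)$, and this is exactly (and only) what the paper's proof of Theorem~3 needs from Theorem~\ref{SS1}. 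Your two-sided estimate $F_{p,q}(x)\le\Liminf_n F_{p_n,q_n}(x)\le F_{p,q}(x^+)$, collapsing to equality at continuity points, is the accurate general statement.
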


\begin{teorema} [B.~Schweizer and A.~Sklar \cite{SS} - Theorem 8.2]\label{SS2} Let $(X,{\mathcal F},t)$ be a Menger space. Suppose that $t$ is continuous and at least as strong as {\L}ukasziewicz norm. Suppose further
that $p_n\to p$, $q_n\to q$, and that $F_{p,q}$ is continuous at $x$. Then
$$\lim_{n\to\infty}F_{p_n,q_n}(x)=F_{p,q}(x).$$
\end{teorema}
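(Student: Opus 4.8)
The plan is to combine Theorem \ref{SS1} with a matching upper estimate. Theorem \ref{SS1} already delivers the lower half, namely $\Liminf_{n\to\infty}F_{p_n,q_n}(x)\ge F_{p,q}(x)$ (this uses only continuity of $t$ together with the left continuity of distribution functions, and does not need continuity at the specific point $x$). Consequently the whole statement reduces to proving
$$\Limsup_{n\to\infty}F_{p_n,q_n}(x)\le F_{p,q}(x).$$
The difficulty is that the Menger inequality $F_{p,r}(x+y)\ge t[F_{p,q}(x),F_{q,r}(y)]$ only ever produces \emph{lower} bounds for distribution functions, whereas here I need an \emph{upper} bound on $F_{p_n,q_n}(x)$; this is exactly the point at which the hypothesis ``$t$ at least as strong as {\L}ukasiewicz'' becomes indispensable.

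First I would fix $\eta>0$ and form the chain $p\to p_n\to q_n\to q$, applying the Menger triangle inequality twice along it. With the increments $\eta$, $x$, $\eta$, and using monotonicity of $t$ in each argument, this gives
$$F_{p,q}(x+2\eta)\ge t\bigl[F_{p,p_n}(\eta),\,t[F_{p_n,q_n}(x),F_{q_n,q}(\eta)]\bigr].$$
Here the two outer factors $F_{p,p_n}(\eta)=F_{p_n,p}(\eta)$ and $F_{q_n,q}(\eta)$ tend to $1$ as $n\to\infty$, since $p_n\to p$ and $q_n\to q$ mean precisely that $F_{p,p_n}(\varepsilon)>1-\lambda$ (respectively for $F_{q,q_n}$) holds eventually, for every $\varepsilon,\lambda>0$.

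Next I would use the {\L}ukasiewicz bound $t(a,b)\ge a+b-1$ on both the inner and the outer norm on the right-hand side, obtaining
$$F_{p,q}(x+2\eta)\ge F_{p,p_n}(\eta)+F_{p_n,q_n}(x)+F_{q_n,q}(\eta)-2.$$
Rearranging isolates the quantity of interest,
$$F_{p_n,q_n}(x)\le F_{p,q}(x+2\eta)+\bigl(1-F_{p,p_n}(\eta)\bigr)+\bigl(1-F_{q_n,q}(\eta)\bigr),$$
and the two parenthesised error terms now vanish in the limit. Taking $\Limsup_{n\to\infty}$ therefore yields $\Limsup_{n\to\infty}F_{p_n,q_n}(x)\le F_{p,q}(x+2\eta)$ for every $\eta>0$. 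Finally, letting $\eta\to0^+$ and invoking the continuity, in particular the right continuity, of $F_{p,q}$ at $x$ gives $F_{p,q}(x+2\eta)\to F_{p,q}(x)$, hence the desired upper estimate. Together with the lower bound from Theorem \ref{SS1} this proves $\lim_{n\to\infty}F_{p_n,q_n}(x)=F_{p,q}(x)$.

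I expect the only genuine obstacle to be the first conceptual step: recognising that the inequality must be set up ``backwards'', deriving a lower bound for $F_{p,q}$ at the shifted argument $x+2\eta$ and then reading it as an upper bound for $F_{p_n,q_n}(x)$. Once this is in place, the additivity furnished by the {\L}ukasiewicz estimate makes the error terms separate cleanly, and the remainder is routine limit-taking; the role of right continuity is merely to close the gap $2\eta$ at the end.
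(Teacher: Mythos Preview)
Your argument is correct and shares its overall architecture with the paper: both obtain the lower half from Theorem~\ref{SS1}, both set up the chain inequality
\[
F_{p,q}(x+2\eta)\ge t\bigl(t(F_{p,p_n}(\eta),F_{p_n,q_n}(x)),F_{q_n,q}(\eta)\bigr),
\]
and both finish by letting $\eta\to0^+$ and invoking continuity of $F_{p,q}$ at $x$. The point of departure is how $F_{p_n,q_n}(x)$ is extracted from that inequality. You apply the {\L}ukasiewicz estimate $t(a,b)\ge a+b-1$ twice to make the right-hand side additive and then rearrange algebraically; this is presumably the original Schweizer--Sklar device, and it genuinely consumes the extra hypothesis. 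The paper does not give a separate proof of Theorem~\ref{SS2} but proves its main theorem, which drops the {\L}ukasiewicz assumption altogether. Its replacement for your algebraic step is the lemma that for \emph{any} continuous $t$-norm, writing $t_*(a,b,c)=t(t(a,b),c)$, one has $\Limsup_n t_*(a_n,b_n,c_n)=\Limsup_n b_n$ whenever $a_n,c_n\to1$, simply because $t_*(1,\beta,1)=\beta$ and $t_*$ is continuous and monotone. Applied to the chain inequality this gives $F_{p,q}(x+2\eta)\ge\Limsup_n F_{p_n,q_n}(x)$ directly. Your route is more elementary and makes clear exactly where {\L}ukasiewicz is used; the paper's route trades that transparency for generality, showing the hypothesis was never needed.
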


However, there are continuous $t$-norms weaker then {\L}ukasiewicz. For instance all Sweizer-Sclar norms $t_p(a,b)=\max\{(a^p+b^p-1)^{1/p},0\}$ for $1<p<\infty$. Therefore, the condition "$t$ is stronger then {\L}ukasiewicz's triangular norm" in the statement of Theorem \ref{SS2} is restrictive. In this paper we shall prove that it can be omitted.

\section{Results}

In this paper our main result is the following theorem, which extend Theorem 2 to Menger spaces with arbitrary continuous triangular norm.

\begin{teorema} Let $(X,{\mathcal F},t)$ be a Menger space, where $t$ is continuous triangular norm, $p,q\in X$ and $(p_{n}),(q_{n})\subseteq X$ such that $p_{n}\rightarrow p$, $q_{n} \rightarrow q$. If $F_{p,q}$ is continuous at $x\in X$, then
$$\lim_{n\rightarrow\infty} F_{p_{n},q_{n}}(x)=F_{p,q}(x).$$
\end{teorema}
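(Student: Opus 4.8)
The plan is to prove the equality by squeezing $F_{p_n,q_n}(x)$ between a $\Liminf$ lower bound and a $\Limsup$ upper bound, both equal to $F_{p,q}(x)$. The lower bound is already supplied by Theorem \ref{SS1}: since $t$ is continuous and $p_n\to p$, $q_n\to q$, lower semicontinuity of $\mathcal F$ gives $\Liminf_{n\to\infty}F_{p_n,q_n}(x)=F_{p,q}(x)$, and in particular the estimate $\Liminf_{n\to\infty}F_{p_n,q_n}(x)\ge F_{p,q}(x)$. Note that this direction needs only left-continuity of distribution functions, which is automatic, and not continuity of $F_{p,q}$ at $x$. Thus the whole content of the theorem reduces to establishing the matching upper estimate $\Limsup_{n\to\infty}F_{p_n,q_n}(x)\le F_{p,q}(x)$.

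First I would insert the points $p$ and $q$ into a chain of Menger triangle inequalities so that $F_{p_n,q_n}(x)$ appears on the \emph{larger} side. Fix $\delta_1,\delta_2>0$. Applying the triangle inequality to the triples $(p,p_n,q)$ and $(p_n,q_n,q)$ yields
$$F_{p,q}(x+\delta_1+\delta_2)\ge t\bigl[F_{p,p_n}(\delta_1),\,F_{p_n,q}(x+\delta_2)\bigr]\ge t\bigl[F_{p,p_n}(\delta_1),\,t\bigl[F_{p_n,q_n}(x),\,F_{q_n,q}(\delta_2)\bigr]\bigr].$$
Because $p_n\to p$ and $q_n\to q$, we have $F_{p,p_n}(\delta_1)\to 1$ and $F_{q_n,q}(\delta_2)\to 1$ for the fixed positive numbers $\delta_1,\delta_2$.

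The key step, and the place where the {\L}ukasiewicz hypothesis is avoided, is to pass the $\Limsup$ through the nested $t$-norm. I would select a subsequence along which $F_{p_n,q_n}(x)$ converges to $L:=\Limsup_{n\to\infty}F_{p_n,q_n}(x)$; along it the two outer factors still tend to $1$. Continuity of $t$ then shows that the right-hand side converges to $t[1,t[L,1]]$, which collapses to $L$ by the boundary identity $t(a,1)=a$ (used together with commutativity). Hence $F_{p,q}(x+\delta_1+\delta_2)\ge L$ for every $\delta_1,\delta_2>0$. Letting $\delta_1,\delta_2\to0^{+}$ and invoking continuity of $F_{p,q}$ at $x$, i.e.\ $F_{p,q}(x^{+})=F_{p,q}(x)$, gives $F_{p,q}(x)\ge L$, the desired upper bound. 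Combined with the lower bound from Theorem \ref{SS1}, this forces $\lim_{n\to\infty}F_{p_n,q_n}(x)=F_{p,q}(x)$.

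I expect the main obstacle to be precisely this interchange of $\Limsup$ with the $t$-norm: one must avoid assuming anything like $t(a,b)\ge a+b-1$ and instead rely solely on continuity of $t$ and the normalization $t(a,1)=a$. The subsequence device handles this cleanly, because only the behavior of $t$ near the corner $(1,1)$ — where every $t$-norm is pinned by the boundary conditions — actually enters the argument. The one genuine use of the hypothesis that $F_{p,q}$ is continuous at $x$ is the final passage $\delta_1,\delta_2\to0^{+}$, where right-continuity at $x$ is needed to return from $F_{p,q}(x+\delta_1+\delta_2)$ to $F_{p,q}(x)$; the lower bound, by contrast, never requires it.
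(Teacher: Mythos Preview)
Your proof is correct and follows essentially the same route as the paper: both insert $p$ and $q$ via an iterated Menger triangle inequality to get $F_{p,q}(x+\text{small})\ge t[\,\cdot\,,t[F_{p_n,q_n}(x),\,\cdot\,]]$, pass to a subsequence realizing $L=\Limsup F_{p_n,q_n}(x)$ and use continuity of $t$ together with $t(a,1)=a$ to collapse the right-hand side to $L$, then let the small parameter go to $0$ via continuity of $F_{p,q}$ at $x$, and finally combine with Theorem~\ref{SS1}. The only cosmetic differences are that the paper introduces the shorthand $t_*(a,b,c)=t(t(a,b),c)$, isolates the $\Limsup$--subsequence step as a preliminary identity (\ref{GornjiLimes}), and uses a single $\varepsilon$ (with $x+2\varepsilon$) in place of your two parameters $\delta_1,\delta_2$.
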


\begin{proof} Let $t_*(a,b,c)=t(t(a,b),c)$. By Definition \ref{TrNorm}.--(\ref{TrNorm2}) and (\ref{TrNorm4}), the value of $t_*$ does not depend on permutation of numbers $a$, $b$ and $c$. Also $t_*$ is increasing on coordinates, and continuous, because $t$ is continuous.

First, let us prove that for arbitrary $a_n$, $b_n$, $c_n\in[0,1]$, $a_n$, $c_n\to1$ implies
\begin{equation}\label{GornjiLimes}
\Limsup_{n\to\infty}t_*(a_n,b_n,c_n)=\Limsup_{n\to+\infty}b_n.
\end{equation}

Indeed, denote $\beta=\Limsup\limits_{n\to\infty}b_n$. Then for any $\varepsilon >0$, there is $n_0$ such that from $n\ge n_0$ it follows $b_n<\beta +\varepsilon$. By monotonicity of $t_*$ and Definition \ref{TrNorm}.--(\ref{TrNorm1}) we obtain
$$t_*(a_n,b_n,c_n)\le t_*(1,\beta +\varepsilon,1)=\beta +\varepsilon,$$
which implies:
\begin{equation}\label{GornjiLimes1}
\Limsup_{n\to\infty}t_*(a_n,b_n,c_n)\le\beta.
\end{equation}
On the other hand, there is a subsequence $\{b_{n_k}\}\subseteq \{b_n\}$ such that $\lim\limits_{k\to\infty}b_{n_k}=\beta$, and hence
\begin{equation}\label{GornjiLimes2}
\lim_{k\rightarrow\infty}t_*(a_{n_k},b_{n_k},c_{n_k})=t_*(1,\beta,1)=\beta,
\end{equation}
because $t_*$ is a continuous function. From (\ref{GornjiLimes1}) and (\ref{GornjiLimes2}) we get (\ref{GornjiLimes}).

Next, for all $p$, $q$, $r$, $s\in X$ and $a$, $b$, $c\in [0,1]$, by Menger's triangle inequality, and monotonicity of $t$, we have
\begin{equation}\label{Prob}
\begin{aligned}F_{p,s}(a+b+c)&\ge t(F_{p,r}(a+b),F_{r,s}(c))\ge\\
    &\ge t(t(F_{p,q}(a),F_{q,r}(b)),F_{r,s}(c))=\\
    &=t_*(F_{p,q}(a),F_{q,r}(b),F_{r,s}(c)).
\end{aligned}
\end{equation}

Now, let $p_n\to p$ and $q_n\to q$, and let $F_{p,q}$ be continuous at $x$. We have
\begin{equation}\label{PQ}
\lim\limits_{n\to\infty}F_{p_n,p}(\varepsilon)=1,\qquad\lim\limits_{n\to\infty}F_{q_n,q}(\varepsilon)=1.
\end{equation}
By (\ref{Prob}) for all $\varepsilon >0$, we have
$$F_{p,q}(x+2\varepsilon)\ge t_*(F_{p,p_n}(\varepsilon),F_{p_n,q_n}(x),F_{q_n,q}(\varepsilon)).$$
Passing to $\Limsup$ and taking account (\ref{PQ}) and (\ref{GornjiLimes}), we obtain
$$F_{p,q}(x+2\varepsilon)\ge \Limsup_{n\to\infty}t_*(F_{p,p_n}(\varepsilon),F_{p_n,q_n}(x),F_{q_n,q}(\varepsilon))
=\Limsup_{n\to\infty}F_{p_n,q_n}(x).$$
By continuity of $F_{p,q}$ (since $\varepsilon>0$ was arbitrary)
$$F_{p,q}(x)\ge\Limsup_{n\to\infty}F_{p_n,q_n}(x).$$

Finally, from Theorem \ref{SS1}, we get
$$\Liminf_{n\to\infty}F_{p_{n},q_{n}}(x)=F_{p,q}(x)\ge \Limsup_{n\to\infty}F_{p_n,q_n}(x),$$
which is equivalent to
$$\lim_{n\rightarrow\infty}F_{p_{n},q_{n}}(x) = F_{p,q}(x).$$
\end{proof}

\bibliographystyle{abbrv}
\bibliography{MengerBibl}

\begin{thebibliography}{1}

\bibitem{KMP}
E.~P. Klement, R.~Mesiar, and E.~Pap.
\newblock {\em Triangular norms}, volume~8 of {\em Trends in Logic}.
\newblock Kluwer Akademic Publishers, Dordrecht, 2000.

\bibitem{KM}
K.~Menger.
\newblock Statistical metrics.
\newblock {\em Proc. Nat. Acad. Sci. U.S.A.}, 28(12):535--537, 1942.

\bibitem{SS}
B.~Schweizer and A.~Sklar.
\newblock Statistical metric spaces.
\newblock {\em Pacific J Math.}, 10(1):313--334, 1960.

\end{thebibliography}

\end{document}